\newcommand*{\id}{{\normalfont\hbox{1\kern-0.4em1}}}
\newcommand{\hide}[1]{}
\newcommand*{\curl}{\operatorname{curl}}
\newcommand*{\grad}{\operatorname{grad}}
\newcommand*{\dive}{\operatorname{div}}
\newcommand*{\Grad}{\operatorname{Grad}}
\newcommand*{\Div}{\operatorname{Div}}
\DeclareMathAccent{\Circ}{\mathalpha}{operators}{"17}
\newcommand{\interior}[1]{\Circ{#1}}
\renewcommand{\Re}{\operatorname{\mathfrak{Re}}}
\newcommand{\oi}[2]{\left]#1,#2 \right[}
\renewcommand{\tilde}{\widetilde}
\renewcommand*{\epsilon}{\varepsilon}
\renewcommand*{\rho}{\varrho}
\theoremstyle{plain}
\newtheorem{thm}{Theorem}[section]
  \theoremstyle{definition}
  \newtheorem{defn}[thm]{Definition}
  \theoremstyle{remark}
  \newtheorem{rem}[thm]{Remark}
 \theoremstyle{definition}
  \theoremstyle{plain}
  \theoremstyle{plain}
  \theoremstyle{plain}
  \newtheorem*{lem*}{Lemma}
  \theoremstyle{plain}
  \newtheorem*{thm*}{Theorem}
  \theoremstyle{plain}
  \theoremstyle{remark}
\begin{document}
 %\articletype{}
    
%\affil[2]{...}   
\title{On Well-Posedness for a Piezo-Electromagnetic Coupling Model with Boundary Dynamics.}
\author{Rainer Picard }  
%\runningtitle{Piezo-Electromagnetism with Boundary Dynamics}   
%\subtitle{An Evo-System Approach} 
   
%\author[2]{...}   
%\author[3]{...}    
%\runningauthor{Picard}   

\maketitle 
\abstract{We consider a coupled system of Maxwell's equations and the equations of elasticity, which is commonly used to model  piezo-electric material behavior. The boundary influence is encoded as a separate  dynamics  on the boundary data spaces coupled to the partial differential equations. Evolutionary well-posedness, i.e. Hadamard well-posedness and causal dependence on the data, is shown for the resulting model system.}   

\section{Introduction.}

There is an abundance of applications for piezoelectric materials.
Their primary use is in ultrasonic transducers. Typical applications
can be found in medical imaging and non-destructive testing of safety
critical structures. The well-posedness of corresponding models, which
is the focus of this paper, is of interest in the evaluation of respective
models and in particular as a basis for inverse problems. A useful
summary of the literature that has examined well-posedness issues
for a range of boundary conditions can be found in \cite{akamatsu2002}
. 

In this paper we will consider a model class discussed in \cite{zbMATH05929394}
and generalize it to a broader class of problems, where for example
the operator coefficients could be non-local, e.g. of convolution
type, and will not be restricted to just multiplication operators.
To be concrete a coefficient operator $\alpha$ may for example be
given in the form
\[
\left(\alpha f\right)\left(x\right)\coloneqq\alpha_{0}\left(x\right)f\left(x\right)+\int_{\Omega}\alpha_{1}\left(x,y\right)f\left(y\right)dy,
\]
where $\Omega$ is the underlying spatial domain carrying the material
properties described by $\alpha$. Another common way non-locality
of coefficients can come into play is via orthogonal projectors entering
the coefficient operators, e.g. Helmholtz projectors. 

More importantly, there will be \emph{no} constraints on the boundary
quality of $\Omega$ so that more complex configurations such as materials
with fractal boundaries, which have been considered and even prototyped
more recently, see e.g. \cite{walker2011}, come into reach. We shall
propose a generalized boundary condition, which in fact takes on the
form of an extra equation describing the dynamics on the topological
boundary set $\dot{\Omega}$ of the underlying non-empty open set
$\Omega$. For computational purposes one would have to assume approximations
by domains with better boundary quality such as a Lipschitz boundary
in which case classical boundary trace operators can be utilized.
To pave the way a discussion of classical boundary trace arguments
and our abstract characterization of boundary data spaces is also
included.

Since in the general situation we consider here boundary trace theorems
are not available, the analysis is based on an alternative characterization
of boundary data, which makes no reference to the boundary quality.

Our discussion will be based on the space-time Hilbert space framework
developed in \cite{Pi2009-1}, see also \cite{PDE_DeGruyter}, for
what we shall call \emph{evo-systems}. After briefly recalling the
conceptual building blocks of the theoretical framework in Section
\ref{sec:A-Brief-Summary} we then establish the classical system
of piezo-electro-magnetism with standard homogeneous boundary conditions
as such a system (Section \ref{sec:The-Evo-System-of}). In Section
\ref{sec:Inhomogeneous-Boundary-Condition} we initially discuss standard
inhomogeneous boundary conditions to introduce the boundary data characterization
utilized in our general setting, in particular Subsection \ref{subsec:Inhomogeneous-Initial-Boundary}.
Then the more complex situation of a Leontovich type boundary condition
is explored within this boundary data space framework in Subsections
\ref{subsec:Leontovich-Type-Boundary}. Rather than implementing this
type of boundary constraint into the differential operator domain,
as is standard for the classical Dirichlet and Neumann type boundary
condition, this mixed type boundary condition is described via additional
dynamic equations in the boundary data spaces.

\section{\label{sec:A-Brief-Summary}A Brief Summary of Evo-Systems}

The solution theory of the class of so-called \emph{evolutionary}
equations (evo-systems for short) introduced in \cite{Pi2009-1} is
based on the fact that the (time) derivative $\partial_{0}$ is, in
a suitable setting, a normal operator with a strictly positive real
part. Indeed, in the space $H_{\nu,0}\left(\mathbb{R},H\right)$,
$\nu\in\oi0\infty$ , ~ of $H$-valued $L^{2,\mathrm{loc}}$-functions
($H$ a Hilbert space with inner product $\left\langle \:\cdot\:|\:\cdot\:\right\rangle _{H}$)
equipped with the inner product $\left\langle \:\cdot\:|\:\cdot\:\right\rangle _{\nu,0,H}$
\[
\left(\varphi,\psi\right)\mapsto\int_{\mathbb{R}}\left\langle \varphi\left(t\right)|\psi\left(t\right)\right\rangle _{H}\;\exp\left(-2\nu t\right)\:dt,
\]
we have that $\partial_{0}$ is a normal operator, i.e. commuting
with its adjoint on $D\left(\partial_{0}^{2}\right)$, and 
\[
\Re\partial_{0}=\nu>0.
\]
Throughout, we denote by $\partial_{0}$ this derivative as a derivative
with respect to time. Under suitable assumptions the latter property
of $\partial_{0}$ can be carried over to problems of the general
form

\begin{equation}
\overline{\partial_{0}M\left(\partial_{0}^{-1}\right)+A}\:U=F,\label{eq:evo}
\end{equation}
where now $A:D\left(A\right)\subseteq H\to H$ is a closed densely
defined linear operator and $\left(M\left(z\right)\right)_{z\in B_{\mathbb{C}}\left(r,r\right)}$
($B_{\mathbb{C}}\left(r,r\right)$ denotes the open ball in $\mathbb{C}$
of radius $r\in\oi0\infty$ centered at $r\in\oi0\infty$~) is a
uniformly bounded analytic operator family. The well-posedness of
\eqref{eq:evo} can be based on strict (real) positive definiteness
of $\left(\partial_{0}M\left(\partial_{0}^{-1}\right)+A\right)$ and
its adjoint for all sufficiently large weight parameters $\nu\in\oi0\infty$~.
The resulting problem class is referred to as \emph{evolutionary equations}
to contrast it with classical evolution equations in Hilbert space,
which are a special case. For emphasis we shall use the term ``evo-system''
for problems of this class, since classical evolution equations are
sometimes also referred to as ``evolutionary''. 

In this paper we shall be dealing with a rather special and so also
more easily accessible case. We only need to consider the case, where
$A$ is skew-selfadjoint and $z\mapsto M\left(z\right)$ is actually
a rational (operator-valued) function defined in a neighborhood of
the origin. 

To recall the solution theory (as described in the last chapter of
\cite{PDE_DeGruyter}) for our somewhat simpler situation the needed
requirement is that $M\left(0\right)$ is selfadjoint and that
\begin{equation}
\nu M\left(0\right)+\Re M^{\prime}\left(0\right)\geq c_{0}>0\mbox{ for all sufficiently large }\nu\in]0,\infty[.\:\label{eq:posdef}
\end{equation}
Equation (\ref{eq:posdef}) is for example satisfied if $M\left(0\right)$
is strictly positive definite on its range and $\Re M^{\prime}\left(0\right)$
strictly positive definite on the null space of $M\left(0\right)$,
which will turn out to be valid in our present application.

\begin{rem}Whenever we are not interested in the actual constant
$c_{0}\in\oi0\infty$ we shall write for the strict positive definiteness
constraint
\[
\Re T\geq c_{0}
\]
simply
\[
T\gg0.
\]
If we want to state that there is such a constant $c_{0}$ for a whole
family of operators $\left(T_{\nu}\right)_{\nu\in I}$, we say that
\[
T_{\nu}\gg0
\]
holds uniformly with respect to $\nu$. 

So, the general requirement for the problem class under consideration
would be stated as $M\left(0\right)$ selfadjoint and
\begin{equation}
\nu M\left(0\right)+\Re M^{\prime}\left(0\right)\gg0\label{eq:pos-def11}
\end{equation}
uniformly for all sufficiently large $\nu\in]0,\infty[$~.\end{rem}

\begin{defn} A problem class is called Hadamard well-posed, if we
have existence, uniqueness of a solution as well as continuous dependence
of the solution on the data. For dynamic problems we also want causal
dependence on the data. We shall call the problem class described
by an evo-system as well-posed, if there exists a continuous linear
solution operator $\mathcal{S}$ (Hadamard-wellposedness), which moreover
satisfies the causality condition\footnote{Here $\chi_{_{I}}\left(m_{0}\right)$ denotes the temporal cut-off
by the characteristic function of $I$
\[
\left(\chi_{_{I}}\left(m_{0}\right)f\right)\left(t\right)=\begin{cases}
f\left(t\right) & \text{ for }t\in I\text{,}\\
0 & \text{ otherwise.}
\end{cases}
\]
} 
\[
\chi_{_{]-\infty,a[}}\left(m_{0}\right)\;\mathcal{S}\:\chi_{_{[a,\infty[}}\left(m_{0}\right)=0
\]
for all $a\in\mathbb{R}$ (causality).\end{defn}

For sake of reference we record the corresponding well-posedness result
for evo-systems.

\begin{thm}\label{thm:well-posed}Let $A:D\left(A\right)\subseteq H\to H$
be skew-selfadjoint and $z\mapsto M\left(z\right)$ be a uniformly
bounded, linear-operator-valued rational function in a neighborhood
of zero such that \eqref{eq:pos-def11} is satisfied uniformly for
all $\nu\in[\nu_{0},\infty[$~, for some $\nu_{0}\in\oi0\infty$~.
Then the evo-system \eqref{eq:evo} is well-posed. \end{thm}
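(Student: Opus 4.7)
The plan is to follow the standard evo-system strategy and work in the weighted Hilbert space $H_{\nu,0}(\mathbb{R},H)$ for all sufficiently large $\nu$, exploiting that there $\partial_{0}$ is normal with $\Re\partial_{0}=\nu$. Since the numerical range of $\partial_{0}$ lies on the vertical line $\{\nu+\mathrm{i}\xi:\xi\in\mathbb{R}\}$, the spectrum of the normal operator $\partial_{0}^{-1}$ sits on the circle of radius $1/(2\nu)$ centered at $1/(2\nu)$, hence inside $B_{\mathbb{C}}(r,r)$ once $\nu$ is large enough. The holomorphic functional calculus for normal operators therefore makes sense of $M(\partial_{0}^{-1})$ as a uniformly bounded operator, so that $\partial_{0}M(\partial_{0}^{-1})+A$ is densely defined; its closure is the object of interest.

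Next I would establish strict accretivity. Using analyticity I write $M(z)=M(0)+zM'(0)+z^{2}N(z)$ with $N$ bounded on $B_{\mathbb{C}}(r,r)$, so that
\[
\partial_{0}M(\partial_{0}^{-1})=\partial_{0}M(0)+M'(0)+\partial_{0}^{-1}N(\partial_{0}^{-1}).
\]
Selfadjointness of $M(0)$ together with $\Re\partial_{0}=\nu$ gives $\Re(\partial_{0}M(0))=\nu M(0)$; the remainder satisfies $\|\partial_{0}^{-1}N(\partial_{0}^{-1})\|\leq C/\nu$, and $\Re A=0$ by skew-selfadjointness of $A$. On a suitable core this yields
\[
\Re\bigl(\partial_{0}M(\partial_{0}^{-1})+A\bigr)\geq \nu M(0)+\Re M'(0)-\frac{C}{\nu}\geq c_{0}-\frac{C}{\nu},
\]
which is bounded below by $c_{0}/2$ once $\nu$ is taken large enough. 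The same computation applied to the formal adjoint (note that $\partial_{0}^{*}$ is again normal with real part $\nu$, and the positivity hypothesis transfers to $z\mapsto M(\bar z)^{*}$) yields two-sided strict positive definiteness, whence a standard density/closure argument produces a continuous inverse $\mathcal{S}_{\nu}$ of $\overline{\partial_{0}M(\partial_{0}^{-1})+A}$ with $\|\mathcal{S}_{\nu}\|\leq 2/c_{0}$. This delivers existence, uniqueness and continuous dependence.

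The principal remaining point, and the step I expect to be the hardest, is causality. Since the bound on $\|\mathcal{S}_{\nu}\|$ is uniform in $\nu\geq\nu_{1}$ and the operator $\partial_{0}M(\partial_{0}^{-1})+A$ acts consistently across the different weighted spaces, one can argue that data $F$ supported in $[a,\infty[$ forces $\mathcal{S}_{\nu}F$ to be likewise supported: any mass of the solution sitting on $]-\infty,a[$ would be amplified by $\exp(-\nu t)$ relative to that of $F$ as $\nu\to\infty$, contradicting uniform boundedness of $\mathcal{S}_{\nu}$. Rendered rigorously via a Paley--Wiener-type argument as developed in \cite{Pi2009-1,PDE_DeGruyter}, this yields the cut-off identity defining causality and completes the proof.
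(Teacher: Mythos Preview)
The paper does not actually prove this theorem: it is merely ``recorded for sake of reference'' as a known result, with the proof deferred to \cite{Pi2009-1} and the last chapter of \cite{PDE_DeGruyter}. Your sketch follows precisely the standard argument from those references --- functional calculus for $M(\partial_{0}^{-1})$ via the spectral location of $\partial_{0}^{-1}$, the Taylor splitting $M(z)=M(0)+zM'(0)+z^{2}N(z)$ to isolate $\nu M(0)+\Re M'(0)$, two-sided accretivity from skew-selfadjointness of $A$, and causality from the $\nu$-uniform bound via a Paley--Wiener argument --- so there is nothing to contrast: your outline is the intended proof and is essentially correct.
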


Thus, we have not only that for every $F\in H_{\nu,0}\left(\mathbb{R},H\right)$
there is a unique solution $U\in D\left(\overline{\partial_{0}M\left(\partial_{0}^{-1}\right)+A}\right)$,
but also that the solution operator $\overline{\partial_{0}M\left(\partial_{0}^{-1}\right)+A}^{-1}:H_{\nu,0}\left(\mathbb{R},H\right)\to H_{\nu,0}\left(\mathbb{R},H\right)$
is a continuous linear mapping, which, moreover, is also causal in
the sense that
\[
\chi_{_{]-\infty,a[}}\left(m_{0}\right)\;\overline{\partial_{0}M\left(\partial_{0}^{-1}\right)+A}^{-1}\:\chi_{_{[a,\infty[}}\left(m_{0}\right)=0
\]
for all $a\in\mathbb{R}$.

On occasion, we also want to use some additional regularity observations,
which we therefore also introduce here. For this we need some dual
spaces. We choose to identify
\begin{eqnarray*}
H & = & H^{\prime}
\end{eqnarray*}
and 
\begin{eqnarray*}
H_{\nu,0}\left(\mathbb{R},H\right) & = & \left(H_{\nu,0}\left(\mathbb{R},H\right)\right)^{\prime},
\end{eqnarray*}
and we define $H_{\nu,1}\left(\mathbb{R},H\right)$ as the domain
of $\partial_{0}$ equipped with the norm induced by the inner product
$\left\langle \:\cdot\:|\:\cdot\:\right\rangle _{\nu,1,H}\coloneqq\left\langle \partial_{0}\:\cdot\:|\partial_{0}\:\cdot\:\right\rangle _{\nu,0,H}$
as well as

\begin{eqnarray*}
H_{\nu,-1}\left(\mathbb{R},H\right) & := & \left(H_{\nu,1}\left(\mathbb{R},H\right)\right)^{\prime}.
\end{eqnarray*}
We also shall make use of the Hilbert space 
\[
H_{\nu,-1}\left(\mathbb{R},D\left(A^{*}\right)^{\prime}\right):=H_{\nu,1}\left(\mathbb{R},D\left(A^{*}\right)\right)^{\prime},
\]
where we canonically consider $D\left(C\right)$ with a closed operator
$C$ as a Hilbert space with respect to the graph inner product. Denoting
again by $A$ the continuous extension of
\begin{align*}
D\left(A\right)\subseteq H & \to D\left(A^{*}\right)^{\prime}\\
x & \mapsto Ax
\end{align*}
we have with this, letting $M_{0}:=M\left(0\right)$, $M_{1}\left(\partial_{0}^{-1}\right):=\partial_{0}\left(M\left(\partial_{0}^{-1}\right)-M\left(0\right)\right)$,
that 
\[
\partial_{0}M_{0}U=F-M_{1}\left(\partial_{0}^{-1}\right)U-AU\in H_{\nu,0}\left(\mathbb{R},D\left(A^{*}\right)^{\prime}\right)
\]
and so we read off that
\begin{equation}
M_{0}U\in H_{\nu,1}\left(\mathbb{R},D\left(A^{*}\right)^{\prime}\right).\label{eq:time-reg}
\end{equation}
We similarly have
\[
AU=F-M_{1}\left(\partial_{0}^{-1}\right)U-M_{0}\partial_{0}U\in H_{\nu,-1}\left(\mathbb{R},H\right)
\]
and so

\begin{equation}
U\in H_{\nu,-1}\left(\mathbb{R},D\left(A\right)\right).\label{eq:space-reg}
\end{equation}

Note that for the solution $U$ according to Theorem \ref{thm:well-posed}
we not only have the regularity statements \eqref{eq:time-reg}, \eqref{eq:space-reg},
but also that the equation 
\[
\partial_{0}M_{0}U+M_{1}\left(\partial_{0}^{-1}\right)U+AU=F
\]
holds in $H_{\nu,-1}\left(\mathbb{R},D\left(A^{*}\right)^{\prime}\right).$
We shall use the latter fact as motivation to drop henceforth the
closure bar for equations of the form \eqref{eq:evo}. 

One of the foremost complications in practical applications is that
the evo-system structure is frequently obscured. This is mostly the
case due to purely formal, i.e. informal, calculations performed under
unclear assumptions in the modeling process. To address rigorous ways
to produce equations equivalent to evo-systems we recall the following
linear algebra terminology.

\begin{defn}If continuous, linear Hilbert space bijections $\mathcal{W},\mathcal{V}$
exist such that 
\[
\mathcal{B}=\mathcal{W}\mathcal{A}\mathcal{V},
\]
then $\mathcal{A}$ and $\mathcal{B}$ are called equivalent. If $\mathcal{V}=\mathcal{W}^{*}$
then $\mathcal{A}$ and $\mathcal{B}$ are called congruent. If $\mathcal{V}=\mathcal{W}^{-1}$
then $\mathcal{A}$ and $\mathcal{B}$ are called similar. If $\mathcal{V},\mathcal{W}$
are unitary then $\mathcal{A}$ and $\mathcal{B}$ are called unitarily
equivalent, unitarily congruent (or unitarily similar), respectively.\end{defn}

\begin{rem}\label{rem:If-continuous,-linear}Equivalence in the stated
sense preserves Hadamard well-posedness.\footnote{This fact is actually the reason for the choice of the term ``equivalence''.}
For an equivalent equation it may, however, be hard to detect further
structural properties, since for example (skew-)selfadjointness gets
easily lost in the process. 

In contrast, \emph{spatial congruence}, i.e. \emph{$\mathcal{W}$}
only acts on the spatial Hilbert space $H$, is, if lifted to the
time-dependent case, a structure preserving operation for evo-systems.
Indeed, for $\mathcal{W}:H\to X$
\begin{eqnarray*}
\mathcal{W}F & = & \mathcal{W}\left(\partial_{0}M_{0}+M_{1}\left(\partial_{0}^{-1}\right)+A\right)\mathcal{W}^{*}\left(\left(\mathcal{W}^{-1}\right)^{*}U\right)\\
 & = & \left(\partial_{0}\mathcal{W}M_{0}\mathcal{W}^{*}+\mathcal{W}M_{1}\left(\partial_{0}^{-1}\right)\mathcal{W}^{*}+\mathcal{W}A\mathcal{W}^{*}\right)\left(\left(\mathcal{W}^{-1}\right)^{*}U\right)
\end{eqnarray*}
where now $\mathcal{W}A\mathcal{W}^{*}$ is still skew-selfadjoint
and $\mathcal{W}M_{0}\mathcal{W}^{*}$ is still selfadjoint. Assuming
that \eqref{eq:posdef} holds, we find
\begin{eqnarray*}
\left\langle U|\nu\mathcal{W}M_{0}\mathcal{W}^{*}U+\Re\left(\mathcal{W}M_{1}\left(0\right)\mathcal{W}^{*}\right)U\right\rangle _{X} & = & \left\langle \mathcal{W}^{*}U|\left(\nu M_{0}+\Re\left(M_{1}\left(0\right)\right)\right)\mathcal{W}^{*}U\right\rangle _{H}\\
 & \geq & c_{0}\left\langle \mathcal{W}^{*}U|\mathcal{W}^{*}U\right\rangle _{H}\\
 & \geq & c_{0}\left\Vert \left(\mathcal{W}^{-1}\right)^{*}\right\Vert ^{-2}\left\langle U|U\right\rangle _{X}
\end{eqnarray*}
where we have used
\begin{eqnarray*}
\left\langle U|U\right\rangle _{X} & = & \left\langle \left(\mathcal{W}^{-1}\right)^{*}\mathcal{W}^{*}U|\left(\mathcal{W}^{-1}\right)^{*}\mathcal{W}^{*}U\right\rangle _{X}\\
 & \leq & \left\Vert \left(\mathcal{W}^{-1}\right)^{*}\right\Vert ^{2}\left\langle \mathcal{W}^{*}U|\mathcal{W}^{*}U\right\rangle _{H}.
\end{eqnarray*}
In particular, \eqref{eq:pos-def11} remains satisfied, i.e. $\left(\partial_{0}\mathcal{W}M_{0}\mathcal{W}^{*}+\mathcal{W}M_{1}\left(\partial_{0}^{-1}\right)\mathcal{W}^{*}+\mathcal{W}A\mathcal{W}^{*}\right)$
is an evo-system operator in $H_{\nu,0}\left(\mathbb{R},X\right)$,
where we originally had an evo-system in $H_{\nu,0}\left(\mathbb{R},H\right)$.\end{rem}

\section{\label{sec:The-Evo-System-of}The Evo-System of Piezo-Electro-Magnetism}

\subsection{\label{subsec:The-Basic-System}The Basic System}

Let $\Omega\subseteq\mathbb{R}^{3}$ be an arbitrary non-empty open
set. The system of Piezo-Electro-Magnetism in a medium occupying $\Omega$
is a coupled system consisting of the equation of elasticity and Maxwell's
equations. The equation of elasticity is given by
\begin{align}
\partial_{0}^{2}\rho_{\ast}u-\Div T & =F_{0},\label{eq:elasticity}
\end{align}
where $u:\mathbb{R}\times\Omega\to\mathbb{R}^{3}$ describes the displacement
of the elastic body $\Omega,$ $T:\mathbb{R}\times\Omega\to\mathrm{sym}\left[\mathbb{R}^{3\times3}\right]$
denotes the stress tensor, which is assumed to attain values in the
space of symmetric matrices. The function $\rho_{\ast}:\Omega\to\mathbb{R}$
stands for the density of $\Omega$ and $F_{0}:\mathbb{R}\times\Omega\to\mathbb{R}^{3}$
is an external force term. Maxwell's equations are given by
\begin{align}
\partial_{0}B+\curl E & =F_{1},\nonumber \\
\partial_{0}D-\curl H & =-J_{0}-\sigma E.\label{eq:Maxwell}
\end{align}
Here, $B,D,E,H:\mathbb{R}\times\Omega\to\mathbb{R}^{3}$ denote the
magnetic flux density, the electric displacement, the electric field
and the magnetic field, respectively. The functions $J_{0},F_{1}:\mathbb{R}\times\Omega\to\mathbb{R}^{3}$
are given source terms and $\sigma:\Omega\to\mathbb{R}^{3\times3}$
denotes the resistance tensor. Of course, all these equations need
to be completed by suitably modified material laws, where also the
coupling will occur. As it will turn out, the system can be written
in the following abstract form
\begin{equation}
\begin{array}{r}
\left(\partial_{0}M_{0}+M_{1}\left(\partial_{0}^{-1}\right)+\left(\begin{array}{cccc}
0 & -\Div & 0 & 0\\
-\Grad & 0 & 0 & 0\\
0 & 0 & 0 & -\curl\\
0 & 0 & \curl & 0
\end{array}\right)\right)\left(\begin{array}{c}
v\\
T\\
E\\
H
\end{array}\right)=\\
=\left(\begin{array}{c}
F_{0}\\
G_{0}\\
-J_{0}\\
F_{1}
\end{array}\right),
\end{array}\label{eq:system}
\end{equation}
for a suitable bounded operator $M_{0}$ and a uniformly bounded rational
operator family $\left(M_{1}\left(z\right)\right)_{z\in U}$, $U$
a neighborhood of zero, on the Hilbert space $H:=L^{2}(\Omega)^{3}\oplus\mathrm{sym}\left[L^{2}(\Omega)^{3\times3}\right]\oplus L^{2}(\Omega)^{3}\oplus L^{2}(\Omega)^{3}$.
Here, $v:=\partial_{0}u$. 

Of course, we also need to impose boundary constraints. To make this
precise, we need to properly define the spatial differential operators.

\begin{defn}We denote by $\interior C_{\infty}(\Omega)$ the space
of arbitrarily differentiable functions with compact support in $\Omega.$
Then we define the operator $\interior\curl$ as the closure of
\begin{align*}
\interior C_{\infty}(\Omega)^{3}\subseteq L^{2}(\Omega)^{3} & \to L^{2}(\Omega)^{3}\\
(\phi_{1},\phi_{2},\phi_{3}) & \mapsto\left(\begin{array}{ccc}
0 & -\partial_{3} & \partial_{2}\\
\partial_{3} & 0 & -\partial_{1}\\
-\partial_{2} & \partial_{1} & 0
\end{array}\right)\left(\begin{array}{c}
\phi_{1}\\
\phi_{2}\\
\phi_{3}
\end{array}\right)
\end{align*}
and $\curl:=\left(\interior\curl\right)^{\ast}\supseteq\interior\curl.$
We also define $\interior\Grad$ and $\interior\Div$ as the closures
of
\begin{align*}
\interior C_{\infty}(\Omega)^{3}\subseteq L^{2}(\Omega)^{3} & \to\mathrm{sym}\left[L^{2}(\Omega)^{3\times3}\right]\\
(\phi_{1},\phi_{2},\phi_{3}) & \mapsto\frac{1}{2}\left(\partial_{j}\phi_{i}+\partial_{i}\phi_{j}\right)_{i,j\in\{1,2,3\}}
\end{align*}
and of
\begin{align*}
\mathrm{sym}\left[\interior C_{\infty}(\Omega)^{3\times3}\right]\subseteq\mathrm{sym}\left[L^{2}(\Omega)^{3\times3}\right] & \to L^{2}(\Omega)^{3}\\
(\phi_{ij})_{i,j\in\{1,2,3\}} & \mapsto\left(\sum_{j=1}^{3}\partial_{j}\phi_{ij}\right)_{i\in\{1,2,3\}},
\end{align*}
respectively, and set $\Grad:=-\left(\interior\Div\right)^{\ast}$
as well as $\Div:=-\left(\interior\Grad\right)^{\ast}.$ Here $L^{2}(\Omega)^{3\times3}$
has a Hilbert space structure unitarily equivalent to $L^{2}(\Omega)^{9}$.
Elements in the domain of the operators marked by a overset circle
satisfy an abstract homogeneous boundary condition, which, in case
of a sufficiently smooth boundary $\partial\Omega$ (e.g. a Lipschitz
boundary), can be written as
\[
u=0\mbox{ on }\partial\Omega
\]
for $u\in D(\interior\Grad)$,
\[
Tn=0\mbox{ on }\partial\Omega
\]
for $T\in D(\interior\Div),$ where $n$ denotes the exterior unit
normal vector field on $\partial\Omega$ and
\[
E\times n=0\mbox{ on }\partial\Omega,
\]
for $E\in D(\interior\curl).$\end{defn}

Not to incur unnecessary constraints on the boundary quality we shall,
however, use the generalized homogeneous boundary conditions of containment
in $D(\interior\Grad),\:D(\interior\Div),\:D(\interior\curl)$, respectively. 

For sake of definiteness we shall focus for now on the classical Dirichlet
case: $n\times E=0$, $v=0$ on the boundary $\partial\Omega$, i.e.
in generalized terms on the system
\begin{equation}
\begin{array}{r}
\left(\partial_{0}M_{0}+M_{1}\left(\partial_{0}^{-1}\right)+\left(\begin{array}{cccc}
0 & -\Div & 0 & 0\\
-\interior\Grad & 0 & 0 & 0\\
0 & 0 & 0 & -\curl\\
0 & 0 & \interior\curl & 0
\end{array}\right)\right)\left(\begin{array}{c}
v\\
T\\
E\\
H
\end{array}\right)=\\
=\left(\begin{array}{c}
F_{0}\\
G_{0}\\
-J_{0}\\
F_{1}
\end{array}\right).
\end{array}\label{eq:Dirichlet-evo}
\end{equation}
We still need to specify the material law of interest. 

\subsection{\label{subsec:The-Material-Relations}The Material Relations of Piezo-Electro-Magnetism}

In this section we discuss material relations suggested in \cite{mindlin1974}
and derive the structure of the corresponding operators $M_{0}$ and
$M_{1}$. Furthermore, we give sufficient conditions on the parameters
involved to warrant the solvability condition (\ref{eq:pos-def11}).\\
The material relations described in \cite{mindlin1974} are initially
given (ignoring for simplicity thermal couplings) in the form (where
we write $\mathcal{E}=\Grad u$ as usual for the strain tensor)
\begin{eqnarray*}
T & = & C\:\mathcal{E}-eE,\\
D & = & e^{*}\mathcal{E}+\epsilon E,\\
B & = & \mu\,H.
\end{eqnarray*}
Here $C\in L\left(\mathrm{sym}\left[L^{2}(\Omega)^{3\times3}\right]\right)$
is the (invertible) elasticity ``tensor''\footnote{Since every linear mapping $F:X\to Y$ can be interpreted as a bilinear
functional $\left(\left(x,y\right)\mapsto y\left(Fx\right)\right)\in\left(X\otimes Y^{\prime}\right)^{\prime}$
the term tensor for $C$ is not completely misplaced. It supports,
however, a common misunderstanding that $C$ is considered to be a
tensor\emph{ field}, where it is indeed just a \emph{mapping} between
symmetric tensor field. The mapping $C$ can only be considered as
a tensor field if we would restrict our attention to multiplicative
mappings, i.e. 
\[
\left(C\mathcal{E}\right)\left(x\right)\coloneqq\tilde{C}\left(x\right)\mathcal{E}\left(x\right)\:a.e.
\]
for an $L^{\infty}$-function $\tilde{C}$ from $\Omega$ to $L\left(\mathrm{sym}\left[\mathbb{R}^{3\times3}\right]\right)$,
which expressly we do not want to limit ourselves to, then $C$ itself
could also be \emph{interpreted} as a tensor field $\left(C_{ij}^{\:\;\;kl}\left(x\right)\right)_{i,j,k,l}$
so that
\[
\left(C\mathcal{E}\right)\left(x\right)=\left(C_{ij}^{\:\;\;kl}\left(x\right)\mathcal{E}_{kl}\left(x\right)\right)_{i,j}.
\]
Since $C$ is supposed to map symmetric tensor fields to symmetric
tensor fields we must have – in this case – the well-known symmetry
relations for the real-valued functions ($g$ denotes the metric tensor)
\[
C^{ijkl}\left(x\right)\coloneqq\sum_{s,t=1}^{3}g^{is}\left(x\right)g^{jt}\left(x\right)C_{st}^{\:\;\;kl}\left(x\right)\:a.e.\,,\;i,j,k,l=1,2,3,
\]
namely that
\[
C^{ijkl}\left(x\right)=C^{ijlk}\left(x\right)=C^{jikl}\left(x\right)\:a.e.\,,\;i,j,k,l=1,2,3.
\]
The also assumed selfadjointness of $C$ clearly results in another
set of symmetry relations
\[
C^{ijkl}\left(x\right)=C^{klij}\left(x\right)\:a.e.\,,\;i,j,k,l=1,2,3,
\]
which is like-wise a standard requirement in this context.}, $\varepsilon,\mu\in L\left(L^{2}(\Omega)^{3}\right)$ are the permittivity
and permeability, respectively, all assumed to be selfadjoint and
non-negative. The notation $L\left(X_{0},X_{1}\right)$ is used to
denote the Banach space of continuous linear mappings from the Hilbert
space $X_{0}$ to the Hilbert space $X_{1}$. In the case $X_{0}=X_{1}$
we write, as done here, more concisely $L\left(X_{0}\right)$. The
operator 
\[
e\in L\left(L^{2}(\Omega)^{3},\mathrm{sym}\left[L^{2}(\Omega)^{3\times3}\right]\right)
\]
acts as a coupling ``parameter''. To adapt the material relations
to our framework we solve for $\mathcal{E}$ and obtain
\begin{eqnarray*}
\mathcal{E} & = & C^{-1}T+C^{-1}eE,\\
D & = & e^{*}C^{-1}T+\left(\epsilon+e^{*}C^{-1}e\right)E,\\
B & = & \mu\:H.
\end{eqnarray*}
Thus, we arrive at a material law equation of the form 
\begin{eqnarray*}
\left(\begin{array}{c}
\rho_{*}v\\
\mathcal{E}\\
D\\
B
\end{array}\right) & = & M_{0}\left(\begin{array}{c}
v\\
T\\
E\\
H
\end{array}\right)+\partial_{0}^{-1}M_{1}\left(\begin{array}{c}
v\\
T\\
E\\
H
\end{array}\right).
\end{eqnarray*}
with 
\begin{equation}
\begin{array}{rcl}
M_{0} & = & \left(\begin{array}{cccc}
\rho_{*} & 0 & 0 & \quad0\\
0 & C^{-1} & C^{-1}e & \quad0\\
0 & e^{*}C^{-1} & \epsilon+e^{*}C^{-1}e & \quad0\\
0 & 0 & 0 & \quad\mu
\end{array}\right),\\
M_{1} & = & \left(\begin{array}{cccc}
0 & 0 & 0 & 0\\
0 & 0 & 0 & 0\\
0 & 0 & \sigma & 0\\
0 & 0 & 0 & 0
\end{array}\right).
\end{array}\label{eq:m01}
\end{equation}
Here $\sigma\in L\left(L^{2}(\Omega)^{3}\right)$ represents an additional
conductivity coefficient. \\
We need to ensure the solvability condition (\ref{eq:pos-def11})
with these material relations to obtain our first result.

\begin{thm}Assume that $\rho_{\ast},\varepsilon,\mu,C$ are selfadjoint
and non-negative. Furthermore, we assume $\rho_{\ast},\mu,C\gg0$
and $\nu\epsilon+\Re\sigma\gg0$ uniformly for all sufficiently large
$\nu\in\oi0\infty$~. Then, $M_{0}$ and $M_{1}$ given by \eqref{eq:m01}
satisfy the condition \eqref{eq:pos-def11} and hence, the corresponding
problem of piezo-electro-magnetism is a well-posed evo-system.\end{thm}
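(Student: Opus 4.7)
The plan is to verify the three hypotheses of Theorem~\ref{thm:well-posed}: (i) the spatial block operator
\[
A := \begin{pmatrix} 0 & -\Div & 0 & 0 \\ -\interior\Grad & 0 & 0 & 0 \\ 0 & 0 & 0 & -\curl \\ 0 & 0 & \interior\curl & 0 \end{pmatrix}
\]
is skew-selfadjoint on $H$; (ii) $M_0$ is selfadjoint; and (iii) \eqref{eq:pos-def11} holds uniformly for all sufficiently large $\nu$. Since $M_1$ in \eqref{eq:m01} has no $z$-dependence, $M(z) = M_0 + z M_1$, so $M_1(0) = M_1$ and $\Re M_1(0)$ is the block matrix whose only nonzero entry is $\Re\sigma$ in position $(3,3)$. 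I expect items (i) and (ii) to be short bookkeeping, and the real work to lie in the positivity condition (iii) for the middle $2\times 2$ block, where the piezoelectric coupling $e$ mixes the elastic and the electric variables.

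For (i), the definitions $\Div = -(\interior\Grad)^*$ and $\curl = (\interior\curl)^*$ give each of the upper-left and lower-right $2\times 2$ blocks the shape $\begin{pmatrix} 0 & B \\ -B^* & 0 \end{pmatrix}$, so their orthogonal direct sum is skew-selfadjoint. For (ii), the diagonal entries $\rho_*$ and $\mu$ are selfadjoint by hypothesis, and the middle $2\times 2$ block of $M_0$ is selfadjoint because $(C^{-1}e)^* = e^* C^{-1}$ (as $C^{-1}$ is selfadjoint) and $\varepsilon + e^* C^{-1} e$ is manifestly selfadjoint.

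For (iii), the outer diagonal entries give $\nu\rho_*\gg 0$ and $\nu\mu\gg 0$ uniformly, thanks to $\rho_*,\mu\gg 0$. For the middle $2\times 2$ block the key observation is the congruence factorization
\[
\begin{pmatrix} C^{-1} & C^{-1}e \\ e^* C^{-1} & \varepsilon + e^* C^{-1} e \end{pmatrix} \;=\; W^* \begin{pmatrix} C^{-1} & 0 \\ 0 & \varepsilon \end{pmatrix} W, \qquad W := \begin{pmatrix} I & e \\ 0 & I \end{pmatrix},
\]
which recasts the associated quadratic form on $(T,E)$ as
\[
\nu\,\langle T+eE\,|\,C^{-1}(T+eE)\rangle \;+\; \langle E\,|\,(\nu\varepsilon + \Re\sigma)\,E\rangle.
\]
Using $C\gg 0$ (hence $C^{-1}\geq \|C\|^{-1}$) and the hypothesis $\nu\varepsilon + \Re\sigma \geq c_0$, this expression is bounded below by $\min(\nu\|C\|^{-1},\,c_0)\bigl(\|T+eE\|^2 + \|E\|^2\bigr)$; bounded invertibility of $W$ on $\mathrm{sym}[L^2(\Omega)^{3\times 3}]\oplus L^2(\Omega)^3$ makes $\|T+eE\|^2 + \|E\|^2$ norm-equivalent to $\|T\|^2 + \|E\|^2$. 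For $\nu$ large enough that $\nu\|C\|^{-1}\geq c_0$, the minimum stabilizes at the $\nu$-independent value $c_0$, yielding the required uniform lower bound.

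The step I anticipate as the main obstacle is precisely this $\nu$-uniform control. The naive worry is that the cross terms $\nu C^{-1}e$ grow linearly in $\nu$ and could wreck any positive lower bound; what saves the day is that the $(2,2)$-entry contains the compensating contribution $\nu\,e^* C^{-1} e$, so that the cross terms assemble into the complete square $\nu\,\langle T+eE\,|\,C^{-1}(T+eE)\rangle$, whose $\nu$-growth is harmless, while $\nu\varepsilon + \Re\sigma$ supplies the pure $E$-bound. The natural alternative—invoking the spatial congruence construction of Remark~\ref{rem:If-continuous,-linear} with the lifting of $W$ to transform the system to one with block-diagonal $M_0$—gives the same result and merely repackages the estimate. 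Once (i)--(iii) are in hand, Theorem~\ref{thm:well-posed} delivers Hadamard well-posedness together with causal dependence on the data.
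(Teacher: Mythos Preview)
Your proof is correct and follows essentially the same route as the paper: both reduce condition \eqref{eq:pos-def11} to the middle $2\times2$ block and then eliminate the coupling via the congruence $W=\begin{pmatrix} I & e\\ 0 & I\end{pmatrix}$ (what the paper calls ``symmetric Gauss steps''), arriving at the block-diagonal operator $\nu\,\mathrm{diag}(C^{-1},\varepsilon)+\mathrm{diag}(0,\Re\sigma)$. Your write-up is simply more explicit about $W$, the resulting quadratic form, and the $\nu$-uniformity of the lower bound, and it additionally spells out the skew-selfadjointness of $A$, which the paper takes for granted at this point.
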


\begin{proof}Obviously, $M_{0}$ is selfadjoint. Moreover, since
$\rho_{\ast},\epsilon,\mu\gg0$, the only thing, which is left to
show, is that

\[
\nu\left(\begin{array}{cc}
C^{-1} & C^{-1}e\\
e^{\ast}C^{-1} & \varepsilon+e^{\ast}C^{-1}e
\end{array}\right)+\left(\begin{array}{cc}
0 & 0\\
0 & \Re\sigma
\end{array}\right)\gg0
\]
for all sufficiently large $\nu.$ By symmetric Gauss steps as congruence
transformations we get that the above operator is congruent to
\[
\nu\left(\begin{array}{cc}
C^{-1} & 0\\
0 & \varepsilon
\end{array}\right)+\left(\begin{array}{cc}
0 & 0\\
0 & \Re\sigma
\end{array}\right).
\]

The latter operator is then strictly positive definite by assumption
and so the assertion follows.\end{proof}

\section{\label{sec:Inhomogeneous-Boundary-Condition}Inhomogeneous Boundary
Conditions}

\subsection{Boundary Data Spaces}

Using that domains of closed, linear Hilbert space operators are themselves
in a canonical sense Hilbert spaces with respect to the associated
graph inner product we see that with
\begin{align*}
D(\interior\Grad) & \subseteq D(\Grad),\\
D(\interior\Div) & \subseteq D(\Div),\\
D(\interior\curl) & \subseteq D(\curl),
\end{align*}
we may consider the ortho-complements of the vanishing boundary data
spaces $D(\interior\Grad)$, $D(\interior\Div)$, $D(\interior\curl)$
in the larger Hilbert spaces $D(\Grad)$, $D(\Div)$, $D(\curl)$,
respectively. Prescribing boundary data for $D(\Grad)$, $D(\Div)$,
$D(\curl)$ can now be done conveniently by choosing elements of these
ortho-complements, which are
\[
N\left(1-\Div\Grad\right),\:N\left(1-\Grad\Div\right),\:N\left(1+\curl\curl\right),
\]
respectively. If $\iota_{\Grad}$, $\iota_{\Div}$, $\iota_{\curl}$
denote the canonical isometric, embeddings (i.e. via the identity)
of these null spaces into $D(\Grad)$, $D(\Div)$, $D(\curl)$, respectively,
then $\iota_{\Grad}^{*}$, $\iota_{\Div}^{*}$, $\iota_{\curl}^{*}$
perform the orthogonal projection\footnote{The more familiar corresponding orthogonal projectors from the projection
theorem context are
\[
P_{N\left(1-\Div\Grad\right)}=\iota_{\Grad}\iota_{\Grad}^{*},\;P_{N\left(1-\Grad\Div\right)}=\iota_{\Div}\iota_{\Div}^{*},\;P_{N\left(1+\curl\curl\right)}=\iota_{\curl}\iota_{\curl}^{*}.
\]
} onto the respective null spaces. With
\[
\overset{\bullet}{\Grad}:=\iota_{\Div}^{*}\Grad\iota_{\Grad},\;\overset{\bullet}{\Div}:=\iota_{\Grad}^{*}\Div\iota_{\Div},\:\overset{\bullet}{\curl}:=\iota_{\curl}^{*}\curl\iota_{\curl},
\]
we get that these are unitary mappings and 
\begin{eqnarray*}
\overset{\bullet}{\Grad}^{*} & = & \overset{\bullet}{\Div},\\
\overset{\bullet}{\curl}^{*} & = & -\overset{\bullet}{\curl}.
\end{eqnarray*}
Note that in contrast we have for example in $\mathbb{R}^{3}$
\[
\Grad^{*}=-\Div,\:\curl^{*}=\curl.
\]
This apparent contrast stems from the different choice of inner product
with respect to which the adjoints are constructed. To understand
this point let us recall from \cite{bath49528} the case of $\overset{\bullet}{\curl}:N\left(1+\curl\curl\right)\to N\left(1+\curl\curl\right)$
(the argument for $\overset{\bullet}{\Grad}$ being analogous). As
a closed subspace of the Hilbert space $D\left(\curl\right)$ the
inner product of $N\left(1+\curl\curl\right)$ is the graph inner
product of $\curl$ and so – indicating inner product by the respective
spaces – we have for all $\phi,\psi\in N\left(1+\curl\curl\right)$,
i.e. with $\curl\curl\phi=-\phi$ and $\psi=-\curl\curl\psi$, indeed
that
\begin{eqnarray*}
\left\langle \overset{\bullet}{\curl}\phi|\psi\right\rangle _{N\left(1+\curl\curl\right)} & = & \left\langle \iota_{\curl}^{*}\curl\iota_{\curl}\phi|\psi\right\rangle _{N\left(1+\curl\curl\right)}\\
 & = & \left\langle \curl\phi|\psi\right\rangle _{D\left(\curl\right)}\\
 & \coloneqq & \left\langle \curl\phi|\psi\right\rangle _{L^{2}\left(\Omega\right)^{3}}+\left\langle \curl\curl\phi|\curl\psi\right\rangle _{L^{2}\left(\Omega\right)^{3}}\\
 & = & -\left\langle \curl\phi|\curl\curl\psi\right\rangle _{L^{2}\left(\Omega\right)^{3}}-\left\langle \phi|\curl\psi\right\rangle _{L^{2}\left(\Omega\right)^{3}}\\
 & = & -\left\langle \phi|\curl\psi\right\rangle _{D\left(\curl\right)}\\
 & = & -\left\langle \phi|\iota_{\curl}^{*}\curl\iota_{\curl}\psi\right\rangle _{N\left(1+\curl\curl\right)}\\
 & = & -\left\langle \phi|\overset{\bullet}{\curl}\psi\right\rangle _{N\left(1+\curl\curl\right)}.
\end{eqnarray*}

\subsection{\label{subsec:Inhomogeneous-Initial-Boundary}Inhomogeneous Initial
Boundary Value Problems}

With the above boundary space set-up we can for example discuss now
inhomogeneous boundary conditions in the sense that we are looking
for a solution
\begin{align}
\left(\partial_{0}M_{0}+M_{1}+\left(\begin{array}{cccc}
0 & -\Div & 0 & 0\\
-\Grad & 0 & 0 & 0\\
0 & 0 & 0 & -\curl\\
0 & 0 & \curl & 0
\end{array}\right)\right)\left(\begin{array}{c}
v\\
T\\
E\\
H
\end{array}\right)=\label{eq:inhom-evo}\\
=\left(\begin{array}{c}
F_{0}\\
G_{0}\\
-J_{0}\\
F_{1}
\end{array}\right)\nonumber 
\end{align}
with
\begin{equation}
\begin{array}{rcl}
v-\iota_{\Grad}v_{\dot{\Omega}} & \in & H_{\nu,-1}\left(\mathbb{R},D\left(\interior\Grad\right)\right)\cap H_{\nu,0}\left(\mathbb{R},L^{2}\left(\Omega,\mathbb{C}^{3}\right)\right),\\
E-\iota_{\curl}E_{\dot{\Omega}} & \in & H_{\nu,-1}\left(\mathbb{R},D\left(\interior\curl\right)\right)\cap H_{\nu,0}\left(\mathbb{R},L^{2}\left(\Omega,\mathbb{C}^{3}\right)\right),
\end{array}\label{eq:inhom-bc}
\end{equation}
where 
\begin{equation}
\begin{array}{rcl}
v_{\dot{\Omega}} & \in & H_{\nu,1}\left(\mathbb{R},N\left(1-\Div\Grad\right)\right),\\
E_{\dot{\Omega}} & \in & H_{\nu,1}\left(\mathbb{R},N\left(1+\curl\curl\right)\right),
\end{array}\label{eq:bdata}
\end{equation}
are given (generalized) boundary data. The solution theory of this
problem can be obtained from solving the evo-system
\begin{align*}
\left(\partial_{0}M_{0}+M_{1}+\left(\begin{array}{cccc}
0 & -\Div & 0 & 0\\
-\interior\Grad & 0 & 0 & 0\\
0 & 0 & 0 & -\curl\\
0 & 0 & \interior\curl & 0
\end{array}\right)\right)\left(\begin{array}{c}
v-\iota_{\Grad}v_{\dot{\Omega}}\\
T\\
E-\iota_{\curl}E_{\dot{\Omega}}\\
H
\end{array}\right)=\\
=\left(\begin{array}{c}
F_{0}\\
G_{0}\\
-J_{0}\\
F_{1}
\end{array}\right)-\left(\partial_{0}M_{0}+M_{1}\right)\left(\begin{array}{c}
\iota_{\Grad}v_{\dot{\Omega}}\\
0\\
\iota_{\curl}E_{\dot{\Omega}}\\
0
\end{array}\right)+\left(\begin{array}{c}
0\\
\iota_{\Div}\overset{\bullet}{\Grad}\:v_{\dot{\Omega}}\\
0\\
-\iota_{\curl}\overset{\bullet}{\curl}\:E_{\dot{\Omega}}
\end{array}\right),
\end{align*}
where we note that
\begin{align*}
\iota_{\Div}\overset{\bullet}{\Grad}v_{\dot{\Omega}} & =\iota_{\Div}\iota_{\Div}^{*}\Grad\iota_{\Grad}v_{\dot{\Omega}},\\
 & =\Grad\iota_{\Grad}v_{\dot{\Omega}},
\end{align*}
\begin{align*}
\iota_{\curl}\overset{\bullet}{\curl}E_{\dot{\Omega}} & =\iota_{\curl}\iota_{\curl}^{*}\curl\iota_{\curl}E_{\dot{\Omega}},\\
 & =\curl\iota_{\curl}E_{\dot{\Omega}}.
\end{align*}
\begin{rem} A similar approach can be used to implement initial conditions
by simply solving the evo-system\footnote{Here $\left(\chi_{_{\oi0\infty}}\otimes U_{0}\right)\::=\chi_{_{\oi0\infty}}\left(t\right)\;U_{0}$
for $t\in\mathbb{R}$. }
\[
\begin{array}{r}
\left(\partial_{0}M_{0}+M_{1}\left(\partial_{0}^{-1}\right)+A\right)U=\\
=\left(\begin{array}{c}
F_{0}\\
G_{0}\\
-J_{0}\\
F_{1}
\end{array}\right)-M_{1}\left(\partial_{0}^{-1}\right)\left(\chi_{_{\oi0\infty}}\otimes\left(\begin{array}{c}
v_{0}\\
T_{0}\\
E_{0}\\
H_{0}
\end{array}\right)\right)+\chi_{_{\oi0\infty}}\otimes A\left(\begin{array}{c}
v_{0}\\
T_{0}\\
E_{0}\\
H_{0}
\end{array}\right),
\end{array}
\]
where 
\[
A=\left(\begin{array}{cccc}
0 & -\Div & 0 & 0\\
-\interior\Grad & 0 & 0 & 0\\
0 & 0 & 0 & -\curl\\
0 & 0 & \interior\curl & 0
\end{array}\right)
\]
and $M_{0}\left(\begin{array}{c}
v_{0}\\
T_{0}\\
E_{0}\\
H_{0}
\end{array}\right)$ with $\left(\begin{array}{c}
v_{0}\\
T_{0}\\
E_{0}\\
H_{0}
\end{array}\right)\in D\left(A\right)$ describe the initial data. The desired solution $\left(\begin{array}{c}
v\\
T\\
E\\
H
\end{array}\right)$ can now be easily reconstructed from 
\[
\left(\begin{array}{c}
v\\
T\\
E\\
H
\end{array}\right)=U+\chi_{_{\oi0\infty}}\otimes\left(\begin{array}{c}
v_{0}\\
T_{0}\\
E_{0}\\
H_{0}
\end{array}\right).
\]
It is for this reason that we have simplified the discussion to vanishing
initial data, compare \cite[Chapter 6]{PDE_DeGruyter}.\end{rem}

\subsection{\label{subsec:Leontovich-Type-Boundary}Leontovich Type Boundary
Conditions as Dynamics on Boundary Data Spaces}

\subsubsection{Translating Particular Model Boundary Conditions}

We recall from \cite{zbMATH05929394} the two boundary conditions:

\begin{align*}
n\times H_{t}-n\times Q^{*}v+E_{t} & =0\text{ on }\partial\Omega,\\
Tn-Q\left(n\times E_{t}\right)+\left(1+\alpha\partial_{0}^{-1}\right)v & =0\text{ on }\partial\Omega,
\end{align*}
where $E_{t},\:H_{t}$ denote the tangential components of $E,\:H$,
respectively, and $Q$, $\alpha$ are certain matrix-valued functions. 

With $n\times$ replaced by $\overset{\bullet}{\curl}$, $Tn$ by
$\overset{\bullet}{\Div}\:\iota_{\Div}^{*}T$ and $E_{t},\:H_{t}$
replaced by $\iota_{\curl}^{*}E,\:\iota_{\curl}^{*}H$, we get

\begin{equation}
\begin{array}{rcl}
\overset{\bullet}{\curl}\:\iota_{\curl}^{*}H-\overset{\bullet}{\curl}Q^{*}\iota_{\Grad}^{*}v+\iota_{\curl}^{*}E & = & 0\\
\overset{\bullet}{\Div}\:\iota_{\Div}^{*}T-Q\;\overset{\bullet}{\curl}\iota_{\curl}^{*}E+\left(1+\alpha\partial_{0}^{-1}\right)\iota_{\Grad}^{*}v & = & 0
\end{array}\label{eq:bc}
\end{equation}
In this now
\begin{eqnarray*}
Q:N\left(1+\curl\curl\right) & \to & N\left(1-\Div\Grad\right)\\
\alpha:N\left(1-\Div\Grad\right) & \to & N\left(1-\Div\Grad\right)
\end{eqnarray*}
are boundary operators. This translation yields boundary conditions
\eqref{eq:bc} which are in a form that allows again generalization
to arbitrary non-empty open sets for $\Omega$, which is one of our
main goals here. 

To motivate this translation process we note that for all $\Phi\in D\left(\curl\right)$
\begin{align}
 & \left\langle \iota_{\curl}^{*}\Phi|\overset{\bullet}{\curl}\:\iota_{\curl}^{*}H\right\rangle _{N\left(1+\curl\curl\right)}=\nonumber \\
 & =\left\langle \iota_{\curl}\iota_{\curl}^{*}\Phi|\curl\:\iota_{\curl}\iota_{\curl}^{*}H\right\rangle _{D\left(\curl\right)}\nonumber \\
 & =\left\langle \iota_{\curl}\iota_{\curl}^{*}\Phi|\curl\:\iota_{\curl}\iota_{\curl}^{*}H\right\rangle _{0}+\nonumber \\
 & +\left\langle \curl\:\iota_{\curl}\iota_{\curl}^{*}\Phi|\curl\curl\:\iota_{\curl}\iota_{\curl}^{*}H\right\rangle _{0}\label{eq:curl-BT}\\
 & =\left\langle \iota_{\curl}\iota_{\curl}^{*}\Phi|\curl\:\iota_{\curl}\iota_{\curl}^{*}H\right\rangle _{0}-\left\langle \curl\:\iota_{\curl}\iota_{\curl}^{*}\Phi|\:\iota_{\curl}\iota_{\curl}^{*}H\right\rangle _{0}\nonumber \\
 & =\left\langle \Phi|\curl\:H\right\rangle _{0}-\left\langle \curl\Phi|\:H\right\rangle _{0}\nonumber \\
 & =\left\langle \Phi|n\times H\right\rangle _{L^{2}\left(\partial\Omega\right)}\nonumber \\
 & =\left\langle \left(\gamma_{-n\times n\times}\iota_{\curl}\right)\iota_{\curl}^{*}\Phi|\left(\gamma_{n\times}\iota_{\curl}\right)\iota_{\curl}^{*}H\right\rangle _{L^{2}\left(\partial\Omega\right)}\nonumber \\
 & =\left\langle \left(\gamma_{-n\times n\times}\iota_{\curl}\right)\iota_{\curl}^{*}\Phi|R_{X}\left(\gamma_{n\times}\iota_{\curl}\right)\iota_{\curl}^{*}H\right\rangle _{X}\nonumber \\
 & =\left\langle \iota_{\curl}^{*}\Phi|\left(\gamma_{-n\times n\times}\iota_{\curl}\right)^{*}R_{X}\left(\gamma_{n\times}\iota_{\curl}\right)\iota_{\curl}^{*}H\right\rangle _{N\left(1+\curl\curl\right)}\nonumber 
\end{align}
and so
\begin{eqnarray*}
\overset{\bullet}{\curl}\:\iota_{\curl}^{*}H & = & \left(\gamma_{-n\times n\times}\iota_{\curl}\right)^{*}R_{X}\left(\gamma_{n\times}\iota_{\curl}\right)\iota_{\curl}^{*}H\\
R_{X}^{*}\left(\left(\gamma_{-n\times n\times}\iota_{\curl}\right)^{-1}\right)^{*}\overset{\bullet}{\curl}\:\iota_{\curl}^{*}H & = & \left(\gamma_{n\times}\iota_{\curl}\right)\iota_{\curl}^{*}H\\
 & = & \gamma_{n\times}H
\end{eqnarray*}

Here $R_{X}:Y\to X$ denotes the associated Riesz mapping and 
\begin{eqnarray*}
\gamma_{-n\times n\times}:D\left(\curl\right) & \to & X\\
\gamma_{n\times}:D\left(\curl\right) & \to & Y
\end{eqnarray*}
are suitable continuous boundary trace surjections with $X,Y$ being
$L^{2}\left(\partial\Omega\right)$-dual Hilbert spaces (we avoid
the intricate details here, see e.g. \cite{zbMATH01866780}, for more
specifics) and
\[
N\left(\gamma_{-n\times n\times}\right)=N\left(\gamma_{n\times}\right)=D\left(\interior\curl\right).
\]
Then
\begin{eqnarray*}
\gamma_{-n\times n\times}\:\iota_{\curl}:N\left(1+\curl\curl\right) & \to & X\\
\gamma_{n\times}\:\iota_{\curl}:N\left(1+\curl\curl\right) & \to & Y
\end{eqnarray*}
are continuous bijections.

\noindent Similarly, for all $\Phi\in D\left(\Grad\right)$
\begin{align}
 & \left\langle \iota_{\Grad}^{*}\Phi|\overset{\bullet}{\Div}\:\iota_{\Div}^{*}T\right\rangle _{N\left(1-\Div\Grad\right)}=\nonumber \\
 & =\left\langle \iota_{\Grad}\iota_{\Grad}^{*}\Phi|\Div\:\iota_{\Div}\iota_{\Div}^{*}T\right\rangle _{D\left(\Grad\right)}\nonumber \\
 & =\left\langle \iota_{\Grad}\iota_{\Grad}^{*}\Phi|\Div\:\iota_{\Div}\iota_{\Div}^{*}T\right\rangle _{0}\nonumber \\
 & +\left\langle \Grad\iota_{\Grad}\iota_{\Grad}^{*}\Phi|\Grad\Div\:\iota_{\Div}\iota_{\Div}^{*}T\right\rangle _{0}\label{eq:Grad-Div-BT}\\
 & =\left\langle \Phi|\Div\:\iota_{\Div}\iota_{\Div}^{*}T\right\rangle _{0}+\left\langle \Grad\Phi|\:\iota_{\Div}\iota_{\Div}^{*}T\right\rangle _{0}\nonumber \\
 & =\left\langle \Phi|\Div\:T\right\rangle _{0}+\left\langle \Grad\Phi|\:T\right\rangle _{0}\nonumber \\
 & =\left\langle \Phi|Tn\right\rangle _{L^{2}\left(\partial\Omega\right)}\nonumber \\
 & =\left\langle \left(\gamma_{1}\iota_{\Grad}\right)\iota_{\Grad}^{*}\Phi|\left(\gamma_{\,\cdot\:n}\iota_{\Div}\right)\iota_{\Div}^{*}T\right\rangle _{L^{2}\left(\partial\Omega\right)}\nonumber \\
 & =\left\langle \left(\gamma_{1}\iota_{\Grad}\right)\iota_{\Grad}^{*}\Phi|R_{\tilde{X}}\left(\gamma_{\,\cdot\:n}\iota_{\Div}\right)\iota_{\Div}^{*}T\right\rangle _{\tilde{X}}\nonumber \\
 & =\left\langle \iota_{\Grad}^{*}\Phi|\left(\gamma_{1}\iota_{\Grad}\right)^{*}R_{\tilde{X}}\left(\gamma_{\,\cdot\:n}\iota_{\Div}\right)\iota_{\Div}^{*}T\right\rangle _{N\left(1-\Div\Grad\right)}\nonumber 
\end{align}
and so
\begin{eqnarray*}
\overset{\bullet}{\Div} & = & \left(\gamma_{1}\iota_{\Grad}\right)^{*}R_{\tilde{X}}\left(\gamma_{\,\cdot\:n}\iota_{\Div}\right)\\
R_{\tilde{X}}^{*}\left(\left(\gamma_{1}\iota_{\Grad}\right)^{-1}\right)^{*}\overset{\bullet}{\Div}\iota_{\Div}^{*}T & = & \left(\gamma_{\,\cdot\:n}\iota_{\Div}\right)\iota_{\Div}^{*}T\\
 & = & \gamma_{\,\cdot\:n}T
\end{eqnarray*}
Here $R_{\tilde{X}}:\tilde{Y}\to\tilde{X}$ denotes the corresponding
associated Riesz mapping and
\begin{eqnarray*}
\gamma_{1}:D\left(\Grad\right) & \to & \tilde{X}\\
\gamma_{\cdot\:n}:D\left(\Div\right) & \to & \tilde{Y}
\end{eqnarray*}
are suitable continuous boundary trace surjections with $\tilde{X},\tilde{Y}$
being $L^{2}\left(\partial\Omega\right)$-dual Hilbert spaces and
\[
N\left(\gamma_{1}\right)=D\left(\interior\Grad\right),\:N\left(\gamma_{\cdot\:n}\right)=D\left(\interior\Div\right).
\]
Then
\begin{eqnarray*}
\gamma_{1}\iota_{\Grad}:N\left(1-\Div\Grad\right) & \to & \tilde{X}\\
\gamma_{\cdot\:n}\iota_{\Div}:N\left(1-\Grad\Div\right) & \to & \tilde{Y}
\end{eqnarray*}
are continuous bijections.

Both instances are showing a close, formal connection, which we have
taken as a justification for the proposed generalization for boundary
terms.

~

\subsubsection{An Evo-System Set-Up}

We shall, however, implement the boundary constraints \eqref{eq:bc}
not as typical boundary conditions but by appending, in the spirit
of abstract $\grad-\dive$~-~systems, see \cite{Picard20164888},
the differential equations in $\Omega$ by dynamical equations on
the boundary spaces. Hence, we consider a system of the form
\begin{align*}
\left(\partial_{0}M_{0}+M_{1}\left(\partial_{0}^{-1}\right)+A\right)\left(\begin{array}{c}
v\\
\left(\begin{array}{c}
T\\
\tau_{T}
\end{array}\right)\\
E\\
\left(\begin{array}{c}
H\\
\tau_{H}
\end{array}\right)
\end{array}\right) & =\left(\begin{array}{c}
F_{0}\\
\left(\begin{array}{c}
G_{0}\\
g_{0}
\end{array}\right)\\
-J_{0}\\
\left(\begin{array}{c}
F_{1}\\
f_{1}
\end{array}\right)
\end{array}\right),
\end{align*}
where 
\[
A=\left(\begin{array}{cccc}
0 & -\left(\begin{array}{c}
-\Grad\\
\iota_{\Grad}^{*}
\end{array}\right)^{*} & 0 & \quad\left(\begin{array}{cc}
0 & 0\end{array}\right)\\
\left(\begin{array}{c}
-\Grad\\
\iota_{\Grad}^{*}
\end{array}\right) & \left(\begin{array}{cc}
0 & 0\\
0 & 0
\end{array}\right) & \left(\begin{array}{c}
0\\
0
\end{array}\right) & \quad\left(\begin{array}{cc}
0 & 0\end{array}\right)\\
0 & \left(\begin{array}{cc}
0 & 0\end{array}\right) & 0 & \quad-\left(\begin{array}{c}
\curl\\
\iota_{\curl}^{*}
\end{array}\right)^{*}\\
\left(\begin{array}{c}
0\\
0
\end{array}\right) & \left(\begin{array}{c}
0\\
0
\end{array}\right) & \left(\begin{array}{c}
\curl\\
\iota_{\curl}^{*}
\end{array}\right) & \quad\left(\begin{array}{cc}
0 & 0\\
0 & 0
\end{array}\right)
\end{array}\right)
\]
is by construction – as desired – skew-selfadjoint and $M_{0}$, $M_{1}\left(\partial_{0}^{-1}\right)$
are to be specified later.

To analyze the operator $A$ closer we need to obtain a better understanding
of $\left(\begin{array}{c}
-\Grad\\
\iota_{\Grad}^{*}
\end{array}\right)^{*}$ and $\left(\begin{array}{c}
-\curl\\
\iota_{\curl}^{*}
\end{array}\right)^{*}$. We first observe that
\[
\left(\begin{array}{c}
-\interior\Grad\\
0
\end{array}\right)\subseteq\left(\begin{array}{c}
-\Grad\\
\iota_{\Grad}^{*}
\end{array}\right),\,\left(\begin{array}{c}
\interior\curl\\
0
\end{array}\right)\subseteq\left(\begin{array}{c}
\curl\\
\iota_{\curl}^{*}
\end{array}\right),
\]
which implies that
\begin{align*}
\left(\begin{array}{c}
-\Grad\\
\iota_{\Grad}^{*}
\end{array}\right)^{*} & \subseteq\left(\begin{array}{c}
-\interior\Grad\\
0
\end{array}\right)^{*}=\left(\begin{array}{cc}
\Div & \:0\end{array}\right),\\
\left(\begin{array}{c}
\curl\\
\iota_{\curl}^{*}
\end{array}\right)^{*} & \subseteq\,\left(\begin{array}{c}
\interior\curl\\
0
\end{array}\right)^{*}=\left(\begin{array}{cc}
\curl & \:0\end{array}\right).
\end{align*}
Thus, for all $\Phi\in D\left(\Grad\right)$ and $\left(\begin{array}{c}
T\\
\tau_{T}
\end{array}\right)\in D\left(\left(\begin{array}{c}
-\Grad\\
\iota_{\Grad}^{*}
\end{array}\right)^{*}\right)$
\begin{align*}
\left\langle -\Grad\Phi\Big|T\right\rangle +\left\langle \iota_{\Grad}^{*}\Phi\Big|\tau_{T}\right\rangle  & =\left\langle \left(\begin{array}{c}
-\Grad\\
\iota_{\Grad}^{*}
\end{array}\right)\Phi\Big|\left(\begin{array}{c}
T\\
\tau_{T}
\end{array}\right)\right\rangle \\
 & =\left\langle \Phi\Big|\left(\begin{array}{c}
-\Grad\\
\iota_{\Grad}^{*}
\end{array}\right)^{*}\left(\begin{array}{c}
T\\
\tau_{T}
\end{array}\right)\right\rangle \\
 & =\left\langle \Phi\Big|\Div T\right\rangle .
\end{align*}
Since 
\[
\left\langle \Grad\Phi\Big|T\right\rangle +\left\langle \Phi\Big|\Div T\right\rangle =0
\]
for $\Phi\in D\left(\interior\Grad\right)$ or $T\in D\left(\interior\Div\right)$
we have for $\Phi\in D\left(\Grad\right)$ and $T\in D\left(\Div\right)$
\begin{eqnarray*}
 &  & \left\langle \Grad\Phi\Big|T\right\rangle +\left\langle \Phi\Big|\Div T\right\rangle =\\
 &  & =\left\langle \Grad\iota_{\Grad}\iota_{\Grad}^{*}\Phi\Big|\iota_{\Div}\iota_{\Div}^{*}T\right\rangle +\left\langle \iota_{\Grad}\iota_{\Grad}^{*}\Phi\Big|\Div\iota_{\Div}\iota_{\Div}^{*}T\right\rangle +\\
 &  & +\left\langle \Grad\iota_{\Grad}\iota_{\Grad}^{*}\Phi\Big|\left(1-\iota_{\Div}\iota_{\Div}^{*}\right)T\right\rangle +\\
 &  & +\left\langle \left(1-\iota_{\Grad}\iota_{\Grad}^{*}\right)\Phi\Big|\Div\iota_{\Div}\iota_{\Div}^{*}T\right\rangle +\\
 &  & +\left\langle \iota_{\Grad}\iota_{\Grad}^{*}\Phi\Big|\Div\left(1-\iota_{\Div}\iota_{\Div}^{*}\right)T\right\rangle +\\
 &  & +\left\langle \Grad\left(1-\iota_{\Grad}\iota_{\Grad}^{*}\right)\Phi\Big|\iota_{\Div}\iota_{\Div}^{*}T\right\rangle +\\
 &  & +\left\langle \left(1-\iota_{\Grad}\iota_{\Grad}^{*}\right)\Phi\Big|\Div\left(1-\iota_{\Div}\iota_{\Div}^{*}\right)T\right\rangle +\\
 &  & +\left\langle \Grad\left(1-\iota_{\Grad}\iota_{\Grad}^{*}\right)\Phi\Big|\left(1-\iota_{\Div}\iota_{\Div}^{*}\right)T\right\rangle \\
 &  & =\left\langle \Grad\iota_{\Grad}\iota_{\Grad}^{*}\Phi\Big|\iota_{\Div}\iota_{\Div}^{*}T\right\rangle +\left\langle \iota_{\Grad}\iota_{\Grad}^{*}\Phi\Big|\Div\iota_{\Div}\iota_{\Div}^{*}T\right\rangle 
\end{eqnarray*}
and recalling \eqref{eq:Grad-Div-BT} we calculate with this for all
$\Phi\in D\left(\Grad\right)$ and $\left(\begin{array}{c}
T\\
\tau_{T}
\end{array}\right)\in D\left(\left(\begin{array}{c}
-\Grad\\
\iota_{\Grad}^{*}
\end{array}\right)^{*}\right)\subseteq D\left(\Div\right)$
\begin{align*}
 & \left\langle \iota_{\Grad}^{*}\Phi\Big|\tau_{T}\right\rangle _{N\left(1-\Div\Grad\right)}=\\
 & =\left\langle \Grad\iota_{\Grad}\iota_{\Grad}^{*}\Phi\Big|\iota_{\Div}\iota_{\Div}^{*}T\right\rangle +\left\langle \iota_{\Grad}\iota_{\Grad}^{*}\Phi\Big|\Div\iota_{\Div}\iota_{\Div}^{*}T\right\rangle ,\\
 & =\frac{1}{2}\left\langle \Grad\iota_{\Grad}\iota_{\Grad}^{*}\Phi\Big|\iota_{\Div}\iota_{\Div}^{*}T\right\rangle +\frac{1}{2}\left\langle \iota_{\Grad}\iota_{\Grad}^{*}\Phi\Big|\Div\iota_{\Div}\iota_{\Div}^{*}T\right\rangle +\\
 & +\frac{1}{2}\left\langle \Grad\iota_{\Grad}\iota_{\Grad}^{*}\Phi\Big|\Grad\Div\iota_{\Div}\iota_{\Div}^{*}T\right\rangle +\\
 & +\frac{1}{2}\left\langle \Div\Grad\iota_{\Grad}\iota_{\Grad}^{*}\Phi\Big|\Div\iota_{\Div}\iota_{\Div}^{*}T\right\rangle ,\\
 & =\frac{1}{2}\left\langle \Grad\:\iota_{\Grad}\iota_{\Grad}^{*}\Phi\Big|\iota_{\Div}\iota_{\Div}^{*}T\right\rangle _{D\left(\Div\right)}+\\
 & +\frac{1}{2}\left\langle \iota_{\Grad}\iota_{\Grad}^{*}\Phi\Big|\Div\:\iota_{\Div}\iota_{\Div}^{*}T\right\rangle _{D\left(\Grad\right)},\\
 & =\frac{1}{2}\left\langle \iota_{\Div}^{*}\Grad\:\iota_{\Grad}\iota_{\Grad}^{*}\Phi\Big|\iota_{\Div}^{*}T\right\rangle _{N\left(1-\Grad\Div\right)}+\\
 & +\frac{1}{2}\left\langle \iota_{\Grad}^{*}\Phi\Big|\iota_{\Grad}^{*}\Div\:\iota_{\Div}\iota_{\Div}^{*}T\right\rangle _{N\left(1-\Div\Grad\right)}\\
 & =\frac{1}{2}\left\langle \overset{\bullet}{\Grad}\iota_{\Grad}^{*}\Phi\Big|\iota_{\Div}^{*}T\right\rangle _{N\left(1-\Grad\Div\right)}+\frac{1}{2}\left\langle \iota_{\Grad}^{*}\Phi\Big|\overset{\bullet}{\Div}\iota_{\Div}^{*}T\right\rangle _{N\left(1-\Div\Grad\right)},\\
 & =\left\langle \iota_{\Grad}^{*}\Phi\Big|\overset{\bullet}{\Div}\:\iota_{\Div}^{*}T\right\rangle _{N\left(1-\Div\Grad\right)}
\end{align*}
and so
\[
\tau_{T}=\overset{\bullet}{\Div}\:\iota_{\Div}^{*}T.
\]
Similarly, we find for all $\Phi\in D\left(\curl\right)$ and $\left(\begin{array}{c}
H\\
\tau_{H}
\end{array}\right)\in D\left(\left(\begin{array}{c}
\curl\\
\iota_{\curl}^{*}
\end{array}\right)^{*}\right)\subseteq D\left(\curl\right)$
\begin{align*}
\left\langle \curl\Phi\Big|E\right\rangle +\left\langle \iota_{\curl}^{*}\Phi\Big|\tau_{H}\right\rangle  & _{N\left(1+\curl\curl\right)}=\left\langle \left(\begin{array}{c}
\curl\\
\iota_{\curl}^{*}
\end{array}\right)\Phi\Big|\left(\begin{array}{c}
H\\
\tau_{H}
\end{array}\right)\right\rangle \\
 & =\left\langle \Phi\Big|\left(\begin{array}{c}
\curl\\
\iota_{\curl}^{*}
\end{array}\right)^{*}\left(\begin{array}{c}
H\\
\tau_{H}
\end{array}\right)\right\rangle \\
 & =\left\langle \Phi\Big|\curl H\right\rangle 
\end{align*}
leading with \eqref{eq:curl-BT} to

\begin{align*}
 & \left\langle \iota_{\curl}^{*}\Phi\Big|\tau_{H}\right\rangle _{N\left(1+\curl\curl\right)}=\\
 & =-\left\langle \curl\iota_{\curl}\iota_{\curl}^{*}\Phi\Big|\iota_{\curl}\iota_{\curl}^{*}E\right\rangle +\left\langle \iota_{\curl}\iota_{\curl}^{*}\Phi\Big|\curl\iota_{\curl}\iota_{\curl}^{*}E\right\rangle \\
 & =-\frac{1}{2}\left\langle \curl\iota_{\curl}\iota_{\curl}^{*}\Phi\Big|\iota_{\curl}\iota_{\curl}^{*}E\right\rangle +\\
 & +\frac{1}{2}\left\langle \iota_{\curl}\iota_{\curl}^{*}\Phi\Big|\curl\iota_{\curl}\iota_{\curl}^{*}E\right\rangle +\\
 & +\frac{1}{2}\left\langle \curl\iota_{\curl}\iota_{\curl}^{*}\Phi\Big|\curl\curl\iota_{\curl}\iota_{\curl}^{*}E\right\rangle +\\
 & -\frac{1}{2}\left\langle \curl\curl\iota_{\curl}\iota_{\curl}^{*}\Phi\Big|\curl\iota_{\curl}\iota_{\curl}^{*}E\right\rangle \\
 & =-\frac{1}{2}\left\langle \curl\iota_{\curl}\iota_{\curl}^{*}\Phi\Big|\iota_{\curl}\iota_{\curl}^{*}E\right\rangle _{D\left(\curl\right)}+\\
 & +\frac{1}{2}\left\langle \iota_{\curl}\iota_{\curl}^{*}\Phi\Big|\curl\iota_{\curl}\iota_{\curl}^{*}E\right\rangle _{D\left(\curl\right)}\\
 & =-\frac{1}{2}\left\langle \overset{\bullet}{\curl}\:\iota_{\curl}^{*}\Phi\Big|\iota_{\curl}^{*}E\right\rangle _{N\left(1-\curl\curl\right)}+\\
 & +\frac{1}{2}\left\langle \iota_{\curl}^{*}\Phi\Big|\overset{\bullet}{\curl}\:\iota_{\curl}^{*}E\right\rangle _{N\left(1-\curl\curl\right)}\\
 & =\left\langle \iota_{\curl}^{*}\Phi\Big|\overset{\bullet}{\curl}\:\iota_{\curl}^{*}E\right\rangle _{N\left(1-\curl\curl\right)}
\end{align*}
and so 
\[
\tau_{H}=\overset{\bullet}{\curl}\:\iota_{\curl}^{*}H.
\]
With this the boundary constraints take the form

\begin{align*}
\tau_{H}-\overset{\bullet}{\curl}\:Q^{*}\iota_{\Grad}^{*}v+\iota_{\curl}^{*}E & =0,\\
\tau_{T}-Q\;\overset{\bullet}{\curl}\iota_{\curl}^{*}E+\left(1+\alpha\partial_{0}^{-1}\right)\iota_{\Grad}^{*}v & =0,
\end{align*}
or

\[
\left(\begin{array}{c}
\tau_{H}\\
\tau_{T}
\end{array}\right)+\left(\begin{array}{cc}
1 & -\overset{\bullet}{\curl}\;Q^{*}\\
-Q\;\overset{\bullet}{\curl} & \left(1+\alpha\partial_{0}^{-1}\right)
\end{array}\right)\left(\begin{array}{c}
\iota_{\curl}^{*}E\\
\iota_{\Grad}^{*}v
\end{array}\right)=0.
\]
We calculate

\begin{align*}
 & \left(\begin{array}{cc}
1 & -\overset{\bullet}{\curl}Q^{*}\\
-Q\overset{\bullet}{\curl} & \left(1+\alpha\partial_{0}^{-1}\right)
\end{array}\right)^{-1}=\\
 & =\left(\begin{array}{cc}
1+\overset{\bullet}{\curl}Q^{*}\left(1+QQ^{*}+\alpha\partial_{0}^{-1}\right)^{-1}Q\overset{\bullet}{\curl} & \overset{\bullet}{\curl}Q^{*}\left(1+QQ^{*}+\alpha\partial_{0}^{-1}\right)^{-1}\\
\left(1+QQ^{*}+\alpha\partial_{0}^{-1}\right)^{-1}Q\overset{\bullet}{\curl} & \left(1+QQ^{*}+\alpha\partial_{0}^{-1}\right)^{-1}
\end{array}\right)
\end{align*}
and thus obtain equivalently
\begin{equation}
\begin{array}{l}
S\left(\partial_{0}^{-1}\right)\left(\begin{array}{c}
\tau_{H}\\
\tau_{T}
\end{array}\right)+\left(\begin{array}{c}
\iota_{\curl}^{*}E\\
\iota_{\Grad}^{*}v
\end{array}\right)=0.\end{array}\label{eq:beq}
\end{equation}
with $S\left(\partial_{0}^{-1}\right)$ given by
\[
\left(\begin{array}{cc}
1+\overset{\bullet}{\curl}Q^{*}\left(1+QQ^{*}+\alpha\partial_{0}^{-1}\right)^{-1}Q\overset{\bullet}{\curl} & \overset{\bullet}{\curl}Q^{*}\left(1+QQ^{*}+\alpha\partial_{0}^{-1}\right)^{-1}\\
\left(1+QQ^{*}+\alpha\partial_{0}^{-1}\right)^{-1}Q\overset{\bullet}{\curl} & \left(1+QQ^{*}+\alpha\partial_{0}^{-1}\right)^{-1}
\end{array}\right).
\]
We are now ready to formulate the material law operators
\[
M_{0}=\left(\begin{array}{cccc}
\rho_{*} & \left(\begin{array}{cc}
0 & 0\end{array}\right) & 0 & \quad\left(\begin{array}{cc}
0 & 0\end{array}\right)\\
\left(\begin{array}{c}
0\\
0
\end{array}\right) & \left(\begin{array}{cc}
C^{-1} & 0\\
0 & 0
\end{array}\right) & \left(\begin{array}{c}
0\\
0
\end{array}\right) & \quad\left(\begin{array}{cc}
C^{-1}e & 0\\
0 & 0
\end{array}\right)\\
0 & \left(\begin{array}{cc}
0 & 0\end{array}\right) & \epsilon+e^{*}C^{-1}e & \quad\left(\begin{array}{cc}
0 & 0\end{array}\right)\\
\left(\begin{array}{c}
0\\
0
\end{array}\right) & \left(\begin{array}{cc}
e^{*}C^{-1} & 0\\
0 & 0
\end{array}\right) & \left(\begin{array}{c}
0\\
0
\end{array}\right) & \quad\left(\begin{array}{cc}
\mu & 0\\
0 & 0
\end{array}\right)
\end{array}\right)
\]
and 
\begin{align*}
M_{1}\left(\partial_{0}^{-1}\right) & =\left(\begin{array}{cccc}
0 & \left(\begin{array}{cc}
0 & 0\end{array}\right) & 0 & \left(\begin{array}{cc}
0 & 0\end{array}\right)\\
\left(\begin{array}{c}
0\\
0
\end{array}\right) & M_{1,22}\left(\partial_{0}^{-1}\right) & \left(\begin{array}{c}
0\\
0
\end{array}\right) & -M_{1,24}\left(\partial_{0}^{-1}\right)\\
0 & \left(\begin{array}{cc}
0 & 0\end{array}\right) & \sigma & \left(\begin{array}{cc}
0 & 0\end{array}\right)\\
\left(\begin{array}{c}
0\\
0
\end{array}\right) & M_{1,42}\left(\partial_{0}^{-1}\right) & \left(\begin{array}{c}
0\\
0
\end{array}\right) & M_{1,44}\left(\partial_{0}^{-1}\right)
\end{array}\right)
\end{align*}
with 
\[
M_{1,44}\left(\partial_{0}^{-1}\right)=\left(\begin{array}{cc}
0 & 0\\
0 & 1+\overset{\bullet}{\curl}Q^{*}\left(1+QQ^{*}+\alpha\partial_{0}^{-1}\right)^{-1}Q\overset{\bullet}{\curl}
\end{array}\right)
\]
\begin{eqnarray*}
M_{1,42}\left(\partial_{0}^{-1}\right) & = & \left(\begin{array}{cc}
0 & 0\\
0 & \left(1+QQ^{*}+\alpha\partial_{0}^{-1}\right)^{-1}Q\;\overset{\bullet}{\curl}
\end{array}\right)\\
M_{1,24}\left(\partial_{0}^{-1}\right) & = & \left(\begin{array}{cc}
0 & \overset{\bullet}{\curl}Q^{*}\left(1+QQ^{*}+\alpha\partial_{0}^{-1}\right)^{-1}\\
0 & 0
\end{array}\right)
\end{eqnarray*}

\[
M_{1,22}\left(\partial_{0}^{-1}\right)=\left(\begin{array}{cc}
0 & 0\\
0 & \left(1+QQ^{*}+\alpha\partial_{0}^{-1}\right)^{-1}
\end{array}\right).
\]

\begin{thm}Assume that $\rho_{\ast},\varepsilon,\mu,C$ are selfadjoint
and non-negative, $Q:N\left(1+\curl\curl\right)\to N\left(1-\Div\Grad\right)$.
Furthermore, we assume $\rho_{\ast},\mu,C\gg0$ and $\nu\epsilon+\Re\sigma\gg0$
uniformly for all sufficiently large $\nu\in\oi0\infty$. Then, $M_{0}$
and $M_{1}\left(\partial_{0}^{-1}\right)$ satisfy the condition (\ref{eq:pos-def11})
and hence, the corresponding problem of piezo-electricity with dynamics
on the boundary data space is also a well-posed evo-system. \end{thm}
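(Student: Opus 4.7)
The strategy is to apply Theorem~\ref{thm:well-posed}: the operator $A$ is skew-selfadjoint by construction (each coupling term in the spatial part of the problem appears with the negative of its adjoint on the opposite side of the block structure), and $M_0$ is manifestly selfadjoint once it is read block by block, using the standing selfadjointness hypotheses on $\rho_*,\mu$ and on the combined $(T,E)$-coefficient matrix. The entire content of the theorem therefore lies in verifying the strict positive definiteness condition \eqref{eq:pos-def11} uniformly for large $\nu$.

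My first step would be to split the state space as $H=H_{\mathrm{PDE}}\oplus H_{\mathrm{bdry}}$ corresponding to the $(v,T,E,H)$- and $(\tau_T,\tau_H)$-components and note that $M_0$ vanishes on $H_{\mathrm{bdry}}$. On $H_{\mathrm{PDE}}$ the block of $\nu M_0+\Re M_1(0)$ has exactly the structure already treated in Theorem~3.1, so the same symmetric Gauss-elimination (congruence) step decouples the $(T,E)$-block to $\mathrm{diag}(C^{-1},\epsilon)$, and the hypotheses $\rho_*,\mu,C\gg 0$ together with $\nu\epsilon+\Re\sigma\gg 0$ give uniform strict positive definiteness on $H_{\mathrm{PDE}}$ for all sufficiently large $\nu$.

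On $H_{\mathrm{bdry}}$ the positive definiteness must come entirely from $\Re M_1(0)$, since $M_0$ is zero there. Reading the explicit formulas for $M_{1,22},M_{1,24},M_{1,42},M_{1,44}$ and using $\overset{\bullet}{\curl}^*=-\overset{\bullet}{\curl}$, my goal would be to identify the $H_{\mathrm{bdry}}$-block of $\Re M_1(0)$ with (a congruent copy of) $\Re S(0)$. The crucial algebraic observation is that $S(0)^{-1}=\bigl(\begin{smallmatrix}1&-\overset{\bullet}{\curl}Q^*\\-Q\overset{\bullet}{\curl}&1\end{smallmatrix}\bigr)$ has its two off-diagonal blocks related by $(-Q\overset{\bullet}{\curl})^*=\overset{\bullet}{\curl}Q^*$, so its symmetric part equals the identity. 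The elementary identity $\Re A^{-1}=(A^*A)^{-1}$, valid whenever $A+A^*=2I$, then yields $\Re S(0)=S(0)S(0)^*$, which is bounded below by $\|S(0)^{-1}\|^{-2}\geq (1+\|Q\|)^{-2}>0$ uniformly. Cross-terms in $\nu M_0+\Re M_1(0)$ between $H_{\mathrm{PDE}}$ and $H_{\mathrm{bdry}}$ are bounded in $\nu$ and are easily absorbed into the two diagonal blocks by Young's inequality when $\nu$ is chosen large. Theorem~\ref{thm:well-posed} then delivers Hadamard well-posedness and causality for the coupled system.

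The main obstacle I anticipate is the bookkeeping needed to identify the $H_{\mathrm{bdry}}$-block of $\Re M_1(0)$ with a congruent copy of $\Re S(0)$: one must trace signs and adjoints (from $\overset{\bullet}{\curl}^*=-\overset{\bullet}{\curl}$ and from the explicit minus sign attached to the $M_{1,24}$-block in the definition of $M_1$) through the four block positions and confirm that the signs really do conspire to reproduce the skew-symmetric part of $S(0)^{-1}$. Once this identification is in place, everything else is a direct recycling of the argument of Theorem~3.1 on $H_{\mathrm{PDE}}$ together with the above short positivity identity on $H_{\mathrm{bdry}}$.
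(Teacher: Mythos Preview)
Your plan is correct and matches the paper's proof in structure: selfadjointness of $M_0$ is immediate, the $H_{\mathrm{PDE}}$-block reduces to the computation already done in Theorem~3.1, and the $H_{\mathrm{bdry}}$-block is handled by showing $\Re S(0)\gg 0$ via the key observation that $\overset{\bullet}{\curl}^{*}=-\overset{\bullet}{\curl}$ makes the off-diagonal blocks skew-adjoint.

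The only deviation is cosmetic. You apply the skew-adjointness to $S(0)^{-1}$, obtain $\Re S(0)^{-1}=I$, and then invoke the identity $\Re A^{-1}=(A^{*}A)^{-1}$ to get a lower bound for $\Re S(0)$. The paper instead applies the same skew-adjointness directly to $S(0)$: the off-diagonal blocks of $S(0)$ themselves are skew-adjoint, so $\Re S(0)$ is block-diagonal with entries $1+\overset{\bullet}{\curl}Q^{*}(1+QQ^{*})^{-1}Q\overset{\bullet}{\curl}$ and $(1+QQ^{*})^{-1}$, each manifestly strictly positive. This saves the detour through $S(0)^{-1}$ and the operator identity, but both routes rest on the identical structural fact. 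Your concern about $H_{\mathrm{PDE}}$--$H_{\mathrm{bdry}}$ cross-terms is moot: once the block positions are read correctly, all non-trivial entries of $M_{1}(0)$ on the boundary side live in the $(\tau_T,\tau_H)$-block, so $\nu M_0+\Re M_1(0)$ is already block-diagonal with respect to the $H_{\mathrm{PDE}}\oplus H_{\mathrm{bdry}}$ splitting and no Young-type absorption is needed.
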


\begin{proof}Obviously, $M_{0}$ is selfadjoint. Moreover, since

\[
\nu\left(\begin{array}{cc}
C^{-1} & C^{-1}e\\
e^{\ast}C^{-1} & \varepsilon+e^{\ast}C^{-1}e
\end{array}\right)+\left(\begin{array}{cc}
0 & 0\\
0 & \Re\sigma
\end{array}\right)\gg0
\]
uniformly for all sufficiently large $\nu\in\oi0\infty$~, the assertion
follows.

Indeed, noting that
\begin{align*}
\Re S\left(0\right)=\\
=\Re\left(\begin{array}{cc}
1+\overset{\bullet}{\curl}\:Q^{*}\left(1+QQ^{*}\right)^{-1}Q\overset{\bullet}{\curl} & \quad\overset{\bullet}{\curl}Q^{*}\left(1+QQ^{*}\right)^{-1}\\
\left(1+QQ^{*}\right)^{-1}Q\overset{\bullet}{\curl} & \left(1+QQ^{*}\right)^{-1}
\end{array}\right),\\
=\left(\begin{array}{cc}
\overset{\bullet}{1+\curl}Q^{*}\left(1+QQ^{*}\right)^{-1}Q\overset{\bullet}{\curl} & 0\\
0 & \left(1+QQ^{*}\right)^{-1}
\end{array}\right)\geq1
\end{align*}
the desired result follows from the general result of Theorem \ref{thm:well-posed}.\end{proof}

\begin{rem}~
\begin{enumerate}
\item For simplicity we have assumed that there is no thermal interaction.
There is, however, no major obstacle to incorporate such interaction
along the lines of \cite{MMA:MMA3866}. Similarly, more complex boundary
constraints of abstract $\grad-\dive$-type could be implemented following
the lead of the present framework. 
\item Although we have merely generalized a known model system, it is clear
from the set-up that more complicated situations are easily incorporated.
For example
\begin{enumerate}
\item apart from the generalized coefficients we can of course allow inhomogeneous
data with no extra provision, since the ``boundary conditions''
are built into the system as part of the evo-system,
\item the material laws can be even more general as long as requirement
\eqref{eq:pos-def11} remains satisfied.
\end{enumerate}
\item As stated in Remark \ref{rem:If-continuous,-linear}, equivalence
is a common way of obscuring the basic structure of evo-systems. In
the above we have in fact encountered such a situation.\\
If we may assume that boundary trace mappings are available, another
pertinent case is given in our present context by
\[
\mathcal{W}\left(\partial_{0}M_{0}+M_{1}\left(\partial_{0}^{-1}\right)+A\right)\mathcal{V}\left(\mathcal{V}^{-1}U\right)=\mathcal{W}F
\]
with 
\begin{eqnarray*}
\mathcal{W} & = & \left(\begin{array}{cccc}
1 & \left(\begin{array}{cc}
0 & 0\end{array}\right) & 0 & \quad\left(\begin{array}{cc}
0 & 0\end{array}\right)\\
\left(\begin{array}{c}
0\\
0
\end{array}\right) & \left(\begin{array}{cc}
1 & 0\\
0 & \gamma_{1}\iota_{\Grad}
\end{array}\right) & \left(\begin{array}{c}
0\\
0
\end{array}\right) & \quad\left(\begin{array}{cc}
0 & 0\\
0 & 0
\end{array}\right)\\
0 & \left(\begin{array}{cc}
0 & 0\end{array}\right) & 1 & \quad\left(\begin{array}{cc}
0 & 0\end{array}\right)\\
\left(\begin{array}{c}
0\\
0
\end{array}\right) & \left(\begin{array}{cc}
0 & 0\\
0 & 0
\end{array}\right) & \left(\begin{array}{c}
0\\
0
\end{array}\right) & \quad\left(\begin{array}{cc}
1 & 0\\
0 & \gamma_{-n\times n\times}\iota_{\curl}
\end{array}\right)
\end{array}\right),\\
\mathcal{V} & = & \mathcal{W}^{*}\left(\begin{array}{cccc}
1 & \left(\begin{array}{cc}
0 & 0\end{array}\right) & 0 & \quad\left(\begin{array}{cc}
0 & 0\end{array}\right)\\
\left(\begin{array}{c}
0\\
0
\end{array}\right) & \left(\begin{array}{cc}
1 & 0\\
0 & R_{\tilde{X}}
\end{array}\right) & \left(\begin{array}{c}
0\\
0
\end{array}\right) & \quad\left(\begin{array}{cc}
0 & 0\\
0 & 0
\end{array}\right)\\
0 & \left(\begin{array}{cc}
0 & 0\end{array}\right) & 1 & \quad\left(\begin{array}{cc}
0 & 0\end{array}\right)\\
\left(\begin{array}{c}
0\\
0
\end{array}\right) & \left(\begin{array}{cc}
0 & 0\\
0 & 0
\end{array}\right) & \left(\begin{array}{c}
0\\
0
\end{array}\right) & \quad\left(\begin{array}{cc}
1 & 0\\
0 & R_{X}
\end{array}\right)
\end{array}\right).
\end{eqnarray*}
The unknown is
\[
\mathcal{V}^{-1}U=\left(\begin{array}{c}
v\\
\left(\begin{array}{c}
T\\
R_{\tilde{X}}^{*}\left(\left(\gamma_{1}\iota_{\Grad}\right)^{-1}\right)^{*}\tau_{T}
\end{array}\right)\\
E\\
\left(\begin{array}{c}
H\\
R_{X}^{*}\left(\left(\gamma_{-n\times n\times}\iota_{\curl}\right)^{-1}\right)^{*}\tau_{H}
\end{array}\right)
\end{array}\right)\in H_{\nu,0}\left(\mathbb{R},\mathcal{Y}\right)
\]
with 
\[
\mathcal{Y}=L^{2}\left(\Omega,\mathbb{C}^{3}\right)\oplus\left(L^{2}\left(\Omega,\mathrm{sym}\left[\mathbb{C}^{3\times3}\right]\right)\oplus\tilde{Y}\right)\oplus L^{2}\left(\Omega,\mathbb{C}^{3}\right)\oplus\left(L^{2}\left(\Omega,\left[\mathbb{C}^{3}\right]\right)\oplus Y\right)
\]
and $\mathcal{W}F\in H_{\nu,0}\left(\mathbb{R},\mathcal{X}\right)$
with 
\[
\mathcal{X}=L^{2}\left(\Omega,\mathbb{C}^{3}\right)\oplus\left(L^{2}\left(\Omega,\mathrm{sym}\left[\mathbb{C}^{3\times3}\right]\right)\oplus\tilde{X}\right)\oplus L^{2}\left(\Omega,\mathbb{C}^{3}\right)\oplus\left(L^{2}\left(\Omega,\left[\mathbb{C}^{3}\right]\right)\oplus X\right).
\]
This is now the corresponding situation utilizing classical boundary
trace spaces. To obtain a structure preserving congruence we could
instead replace $\mathcal{V}$ by $\mathcal{W}^{*}$ in which case
\[
\left(\mathcal{W}^{-1}\right)^{*}U=\left(\begin{array}{c}
v\\
\left(\begin{array}{c}
T\\
\left(\left(\gamma_{1}\iota_{\Grad}\right)^{-1}\right)^{*}\tau_{T}
\end{array}\right)\\
E\\
\left(\begin{array}{c}
H\\
\left(\left(\gamma_{-n\times n\times}\iota_{\curl}\right)^{-1}\right)^{*}\tau_{H}
\end{array}\right)
\end{array}\right)\in H_{\nu,0}\left(\mathbb{R},\mathcal{X}\right)
\]
is now the new unknown.
\end{enumerate}
\end{rem}

~

\section{Summary}

We have generalized a piezo-electromagnetism model with Dirichlet
type boundary conditions to arbitrary non-empty open sets, as well
as to include operator coefficients, indeed to general material laws.
The resulting evo-system in a non-empty open set $\Omega$ and on
boundary data spaces, which includes inhomogeneous volume and boundary
data, has been investigated for evolutionary well-posedness, i.e.
Hadamard well-posedness and causality. Based on this the model has
been extended to include also a Leontovich type boundary coupling
via an additional set of dynamic equations on spaces characterizing
boundary data.

~

\end{document}